\newtheorem{theorem}{Theorem}[section]
\newtheorem{lemma}[theorem]{Lemma}
\theoremstyle{definition}
\theoremstyle{remark}
\numberwithin{equation}{section}
\theoremstyle{plain}
\newtheorem{corollary}{Corollary}
\begin{document}
\title[On Langlands-Shahidi for Loop Groups]{On Extending the Langlands-Shahidi Method to Arithmetic Quotients of Loop Groups}
\author{Howard Garland}
\address{Department of Mathematics, Yale University}
\email{garland-howard@yale.edu}
\subjclass[2000]{Primary 11F99; Secondary 22E67}
\keywords{automorphic $L$-functions, Eisenstein series, loop groups}
\dedicatory{To Gregg Zuckerman on his 60th birthday}
\begin{abstract}
We discuss certain Eisenstein series on arithmetic quotients of loop groups,
$\hat{G}$, which are associated to cusp forms on finite-dimensional groups
associated with maximal parabolics of $\hat{G}$.

\end{abstract}
\maketitle

\thispagestyle{plain}

\begin{center}
\bigskip

\textsc{Introduction}
\end{center}

\bigskip

In his paper "Euler Products" ([L]), Langlands uses the meromorphic
continuation of Eisenstein series associated with certain cusp forms $\varphi$
on an arithmetic quotient of a Chevalley group $H$, to derive the meromorphic
continuation of certain $L$ -functions associated to $\varphi$ and certain
finite-dimensional representations $\pi$, of the $L$ -group of $H.$ His method
was effective because he had already obtained the meromorphic continuation of
the Eisenstein series ([L2]). However, there were limitations, one being that
in order to apply the Langlands method, $H$ must (up to local isomorphism) be
realized as the semi-simple part of a maximal parabolic subgroup of a
higher-dimensional group, since the method requires using the meromorphic
continuation of an Eisenstein series associated with such a parabolic. There
are then cases which are excluded; e.g., $H_{1}=E_{8} $ and any $\pi,$ and
$H_{2}=SL_{3}(\mathbb{R})\times SL_{3}(\mathbb{R})\times SL_{3}(\mathbb{R})$
and $\pi$ being the representation $\tilde{\pi}, $ the triple tensor product
of the standard representation of $SL_{3}(\mathbb{R}).$ The latter case has
been of particular interest in the theory of automorphic $L$-functions.

Another limitation of the Langlands method has been that, by itself, it does
not yield a holomorphic continuation in cases where that is expected and
desired - only a meromorphic one. This limitation, and other issues (e.g.,
obtaining a functional equation) were dealt with to a significant extent by
Shahidi and others, using the Langlands-Shahidi method.

In the present paper, we describe a possible alternative method for overcoming
these limitations in a number of cases (including the case of ($H_{2}%
,\tilde{\pi}),$ above). This method is based on the theory of Eisenstein
series on arithmetic quotients of loop groups (see e.g., [LG], [R], [AC]), on
a bold suggestion of A. Braverman and D. Kazhdan, which we will describe in
more detail in \S 5 and later in this Introduction, and on a lemma of F.
Shahidi (Lemma 4.1). Then for example, $H_{1},$ $H_{2}$ can each be realized
as the semi-simple part of a maximal parabolic subgroup of $\hat{E}_{8}$ (=
affine $E_{8})$ and $\hat{E}_{6}$ (= affine $E_{6}),$ respectively.
Remarkably, one can then obtain the holomorphic continuation of appropriate
Eisenstein series, with relative ease in a number of cases (again, including
$H_{2}$).

Here we obtain the existence of the desired Eisenstein series (establishing a
Godement criterion (3.11)), and then obtain the Maass-Selberg relations (see
(4.3)). In fact, obtaining the latter proved remarkably easy, thanks to a
result of F. Shahidi (Lemma 4.1). The proof of the Godement criterion (Cor. 1
to Theorem 3.2) depends on the convergence theorem in [AC] for minimal
parabolics, and an argument in [GMRV] for extending convergence theorems for
minimal parabolics to more general ones. It might seem then, that we are in a
good position to extend the results of [L]. However, the same result of
Shahidi, that simplifies the derivation of the Maass-Selberg relations for our
Eisenstein series, also seems at first to prevent the extraction of the
desired $L$-functions from the constant terms of such Eisenstein series: the
problem is that when Shahidi's result does yield something like (4.3), the
reason is precisely that the constant term is "elementary" and does not
involve $L$-functions. This is what happens for example, for the pair
$(\hat{E}_{6},H_{2}).$

A. Braverman and D.Kazhdan proposed a way out of this dilemma. Before
discussing their idea, we note two things: \textit{first}, thanks to (4.3) we
obtain a holomorphic continuation of our Eisenstein series in some cases and
\textit{second}, the methods used in \S \S 2-4 of this paper apply equally
well to number fields and to function fields over finite fields (see e.g., [Lo]).

Braverman and Kazhdan proposed that instead of only computing the constant
terms with respect to "upper triangular" parabolics (which are sufficient for
obtaining convergence and the Maass-Selberg relations), that if possible, one
also computes the constant terms with respect to "lower triangular" parabolics
in order to obtain the $L$-functions (see \S 5, for the definition of upper
and lower triangular parabolic subgroups). Of course in the finite-dimensional
case there is no essential difference between lower and upper triangular
parabolics: lower triangular parabolics are conjugate to upper triangular ones.

As in the finite-dimensional case treated in [L], the computation of the
constant terms will depend on local computations, and in particular, on
certain formulae of Gindikin-Karpelevich type for the lower triangular case.
Such formulae have been conjectured in [BFK] in the non-archimedean case, and
proved there for $F((t)),$ $F$ a finite field. A proof for all non-archimedean
fields will be given in [BGKP]. We will discuss this in \S 5. For now let it
suffice to say that the situation for lower triangular parabolics is more
subtle than for upper triangular ones.

A striking feature of the loop case is that one has reproduced a significant
portion of the theory of automorphic forms for finite-dimensional groups, and
now, considering the two theories together, the loop case and the
finite-dimensional case, one might obtain new results about the classical
theory (e.g., holomorphic continuation of automorphic $L$-functions associated
with (finite-dimensional)cusp forms). The situation is reminiscent of the
proof of Bott periodicity, using the space of based loops of compact symmetric
spaces: one obtains cell decompositions for generalized flag manifolds
-grassmannians for example (parameterized by coset spaces of finite Weyl
groups), and also, cell decompositions of based loop spaces of compact
symmetric spaces (parameterized by coset spaces of affine Weyl groups), and
then, comparing the finite-dimensional cases and the loop cases, Bott
periodicity falls out - a deep result about the homotopy of
(finite-dimensional) compact, symmetric spaces. One might say, we are dealing
here with a "Bott principle" for the theory of automorphic forms: comparing
the loop and finite-dimensional theories, one might obtain new results about
(finite-dimensional) automorphic forms and automorphic $L$ -functions.

I am submitting this paper in honor of Gregg Zuckerman's 60th birthday. Over a
period of many years, I have had the pleasure of collaborating with him on
three papers, and of having an infinite number of discussions. These
discussions ranged over a wide spectrum of mathematics and, as many of his
colleagues know from their own experience, discussions with Gregg are
memorable for his clear grasp of deep mathematical ideas and his uncanny
ability to explain them with utter clarity.

\bigskip

\section{The Setting}

We let \ $A$ be an irreducible, $l\times l,$ classical, Cartan matrix, and we
let $\tilde{A}$ be the corresponding affine, Cartan matrix. We let
$\mathfrak{g}=\mathfrak{g}(A),$ $\mathfrak{\hat{g}}=\mathfrak{g}(\tilde{A})$
be the complex, Kac-Moody Lie algebras corresponding to $A,$ $\hat{A},$
respectively. We let $\mathfrak{g}_{\mathbb{Z}}\subseteq\mathfrak{g},$
$\mathfrak{\hat{g}}_{\mathbb{Z}}\subseteq\mathfrak{\hat{g}}$ denote the
Chevalley $\mathbb{Z}$ -forms, with $\mathfrak{\hat{g}}_{\mathbb{Z}}$
constructed from $\mathfrak{g}_{\mathbb{Z}},$ as in [LA]. We let%
\[
\mathfrak{\hat{g}}^{e}=\mathfrak{\hat{g}}\oplus\mathbb{C}D,
\]%
\[
\mathfrak{\hat{g}}_{\mathbb{Z}}^{e}=\mathfrak{\hat{g}}_{\mathbb{Z}}%
\oplus\mathbb{Z}D
\]
denote the extended, affine, Kac-Moody Lie algebra and $\mathbb{Z}$ -form,
respectively ($D$ being the usual, homogeneous degree operator (see e.g.,
[LG], \S 3(after Prop. 3.3))).

We let $\{e_{i},f_{i},h_{i}\}_{i=1,...,l+1}$ be the Kac-Moody generators of
$\mathfrak{\hat{g}},$ ordered so that $\{e_{i},f_{i},h_{i}\}_{i=1,...,l}$
generate $\mathfrak{g},$ which we may regard as a subalgebra of
$\mathfrak{\hat{g}.}$ We let $\mathfrak{h}$ (resp., $\mathfrak{\hat{h}})$ be
the complex, linear span of the $h_{i},$ $i=1,...,l$ $($resp.,
$i=1,....,l+1),$ and set $\mathfrak{\hat{h}}^{e}=\mathfrak{\hat{h}}%
\oplus\mathbb{C}D,$ $\mathfrak{\hat{h}}_{\mathbb{Z}}=\mathbb{Z}-$span of the
$h_{i},$ $i=1,....,l+1,$ $\mathfrak{\hat{h}}_{\mathbb{Z}}^{e}=\mathfrak{\hat
{h}}_{\mathbb{Z}}\oplus\mathbb{Z}D.$ Recall that $\lambda\in(\mathfrak{\hat
{h}})^{\ast}$ is called \textit{dominant integral}, in case%
\[
\lambda(h_{i})\in\mathbb{Z}_{\geq0},\;i=1,....,l+1.
\]
We further adopt the convention that $\lambda(h_{i})$ must be $>0,$for at
least one $i.$

Given $\lambda\in\mathfrak{\hat{h}}^{\ast}$ dominant integral, we let
$V^{\lambda}$ denote the corresponding irreducible highest weight module of
$\mathfrak{\hat{g}},$ and we let $V_{\mathbb{Z}}^{\lambda}\subseteq
V^{\lambda}$ be a Chevalley $\mathbb{Z}$ -form, as constructed in [LA]. For a
commutative ring with unit, we set $V_{R}^{\lambda}=R\otimes_{\mathbb{Z}%
}V_{\mathbb{Z}}^{\lambda}$ (we also set $\mathfrak{\hat{g}}_{R}=R\otimes
_{\mathbb{Z}}\mathfrak{\hat{g}}_{\mathbb{Z}},$ $\mathfrak{g}_{R}%
=R\otimes_{\mathbb{Z}}\mathfrak{g}_{\mathbb{Z}},$ etc.).

For an algebraically closed field $k,$we let $\hat{G}_{k}^{\lambda}$
$(=\hat{G}_{k}^{\lambda}(\hat{A}))$ be the Chevalley group contained in
$Aut(V_{k}^{\lambda}),$ as defined in [LG], Definition (7.21). For an
arbitrary field $k$ with algebraic closure $\bar{k},$ we let
\[
\hat{G}_{k}^{\lambda}(=\hat{G}_{k}^{\lambda}(\hat{A}))
\]
be the subgroup of $\hat{G}_{\bar{k}}^{\lambda}$ defined by%
\begin{equation}
\hat{G}_{k}^{\lambda}=\{g\in\hat{G}_{\bar{k}}^{\lambda}|g(V_{k}^{\lambda
})=V_{k}^{\lambda}\}.\tag{1.1}%
\end{equation}
In general, for $k$ not algebraically closed, $\hat{G}_{k}^{\lambda}$ so
defined, is larger than the corresponding group of [LG], Definition (7.21).

We let%
\begin{equation}
\hat{G}_{\mathbb{Z}}^{\lambda}(=\hat{\Gamma})=\{\gamma\in\hat{G}_{\mathbb{R}%
}^{\lambda}|\gamma(V_{\mathbb{Z}}^{\lambda})=V_{\mathbb{Z}}^{\lambda
}\}.\tag{1.2}%
\end{equation}
We \textit{adopt the notation} of [LG], [R], [AC]. For%
\[
\nu:\hat{h}_{\mathbb{R}}\rightarrow\mathbb{C}\text{ (real, linear)}%
\]
satisfying Godement's criterion%
\begin{equation}
\operatorname{Re}\nu(h_{i})<-2,\text{ }i=1,....,l+1,\tag{1.3}%
\end{equation}
we have from [R], Theorem 5.1, and [AC], Theorem 12.1:

\begin{theorem}
The infinite sum%
\begin{equation}
\sum_{\gamma\in\hat{\Gamma}/\hat{\Gamma}\cap\hat{B}}\Phi_{\nu}(g\exp
(-rD)\gamma)\tag{1.4}%
\end{equation}
converges absolutely. Moreover, the convergence is uniform on sets $\hat
{K}\Omega_{A}\eta(s)\hat{U}_{\mathcal{D}},$where $s=e^{-r},$ $\Omega
_{A}\subseteq\hat{A}$ is compact, $\hat{U}_{\mathcal{D}}\subseteq\hat{U},$ as
in [AC].
\end{theorem}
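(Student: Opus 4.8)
The plan is to follow [R] and [AC], reducing the absolute convergence to a weighted counting estimate on the coset space $\hat\Gamma/\hat\Gamma\cap\hat B$. First I would apply the Iwasawa decomposition $\hat G=\hat K\hat A\hat U$ to write $g\exp(-rD)\gamma=k\,a(\gamma)\,u$ for each coset representative $\gamma$; since $\Phi_\nu$ is left $\hat K$-invariant and transforms on $\hat B$ by the quasi-character attached to $\nu$ together with the modular shift $\rho$, the summand reduces to $|a(\gamma)^{\operatorname{Re}\nu+\rho}|$, up to a factor depending only on the fixed element $g\exp(-rD)$. The task is then to bound $\sum_\gamma |a(\gamma)^{\operatorname{Re}\nu+\rho}|$.

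Second, I would translate the torus projection $a(\gamma)$ into norms in the fundamental highest-weight modules. Using the embedding of $\hat G$ into the product of the $\operatorname{Aut}(V^{\lambda_i})$ over the fundamental weights $\lambda_i$, together with the standard identity (see [R]) expressing $a(\gamma)$ through the $\hat K$-invariant norms $\|\gamma^{-1}v_{\lambda_i}\|$ of the images of the highest-weight vectors, one dominates $|a(\gamma)^{\operatorname{Re}\nu+\rho}|$ by a product $\prod_i\|\gamma^{-1}v_{\lambda_i}\|^{-c_i}$ with exponents $c_i>0$ exactly when (1.3) holds, the precise threshold $-2$ reflecting the affine $\rho$ and the degree grading. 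Because $\gamma$ preserves the lattice $V^{\lambda_i}_{\mathbb Z}$, the vectors $\gamma^{-1}v_{\lambda_i}$ are integral and hence bounded away from $0$ in norm, so the number of cosets for which these norms lie in a prescribed dyadic range is governed by a lattice-point count.

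The hard part will be handling the infinitude of the affine Weyl group and the infinitely many positive roots, which rules out the finite product of geometric series that suffices in the finite-dimensional case. Here the factor $\exp(-rD)$, equivalently the parameter $s=e^{-r}<1$, is indispensable: the $D$-grading inserts a factor decaying geometrically in the degree, i.e. in the loop (imaginary-root) direction. I would organize the sum by the length and degree of the Birkhoff cell containing $\gamma$, sum first within a fixed degree --- a classically convergent sum once $\operatorname{Re}\nu(h_i)<-2$ --- and then sum over degrees against a geometric series in $s$, producing a convergent majorant. This is precisely the delicate estimate of [AC, Theorem 12.1], where uniform control of the counting function in the degree direction is genuinely required.

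Finally, for uniformity on the Siegel-type set $\hat K\Omega_A\eta(s)\hat U_{\mathcal D}$, I would note that the $\hat K$-component is absorbed by left invariance, the $\hat A$-component ranges over the compact set $\Omega_A$ so that $|a^{\operatorname{Re}\nu+\rho}|$ is uniformly bounded there, and the $\hat U$-component is confined to the fixed set $\hat U_{\mathcal D}$. The per-term bounds of the previous steps therefore hold with constants independent of the chosen point, and the convergent majorant constructed above yields the asserted uniform convergence.
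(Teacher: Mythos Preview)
The paper does not actually prove Theorem~1.1: it is quoted verbatim from [R], Theorem~5.1, and [AC], Theorem~12.1, and the surrounding text only recalls the notation. Your proposal is therefore not competing with a proof in this paper but is a sketch of what happens in those references, and at that level it is broadly accurate: the reduction via Iwasawa to a torus-valued quasicharacter, the reinterpretation of the $\hat A$-part through norms of highest-weight vectors in the $V^{\lambda_i}$, the decisive role of $\eta(s)=\exp(-rD)$ in damping the imaginary-root direction, and the passage to a lattice-counting problem are all genuine ingredients of Garland's argument.

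One correction of detail: in this paper's normalization $\Phi_\nu(k a u)=a^{\nu}$, with no $\rho$-shift built in (see the end of \S1). Your summand should therefore be $|a(\gamma)^{\operatorname{Re}\nu}|$, not $|a(\gamma)^{\operatorname{Re}\nu+\rho}|$; the threshold $\operatorname{Re}\nu(h_i)<-2$ in (1.3) is exactly the condition that $\operatorname{Re}\nu+\rho$ lie in the open negative chamber, which is where $\rho$ enters the convergence analysis, but it is not part of the definition of $\Phi_\nu$ itself. This is a convention slip rather than a structural error. Your description of the ``hard part'' as summing first at fixed degree and then geometrically in $s$ is a fair heuristic, though the actual argument in [AC] is more intricate than a clean two-step sum; since you already defer to [AC, Theorem~12.1] for that estimate, your outline is consistent with how the present paper treats the result.
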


The notation is as in [AC], but for the sake of completeness, we add a few
words of explanation: As in [LG], $V_{\mathbb{C}}^{\lambda}$ admits a
positive-definite, Hermitian inner product $\{,\},$which is invariant with
respect to a certain "compact form" $\mathfrak{\hat{k}}\subseteq
\mathfrak{\hat{g}}_{\mathbb{C}}$ (as defined in [LA] ($\mathfrak{\hat{k}}$
being $\mathfrak{k}(\tilde{A})$ of [LA], \S 4). The form $\{,\}$ then
restricts to a real, positive-definite inner product on $V_{\mathbb{R}%
}^{\lambda}$ and $\hat{K}\subseteq\hat{G}_{\mathbb{R}}^{\lambda}$ is defined
by%
\[
\hat{K}=\{k\in\hat{G}_{\mathbb{R}}^{\lambda}|\{k\xi,k\eta\}=\{\xi
,\eta\},\text{ }\xi,\eta\in V_{\mathbb{R}}^{\lambda}\}.
\]
We fix a coherently ordered basis (see [LG], beginning of \S 12 for the
definition) $\mathcal{B},$ say, of $V_{\mathbb{Z}}^{\lambda},$ and we let
$\hat{A}\subseteq G_{\mathbb{R}}^{\lambda}$ be the subgroup of all diagonal
(with respect to the basis $\mathcal{B})$elements with positive entries. We
let $\hat{U}\subseteq G_{\mathbb{R}}^{\lambda}$ be the subgroup of all upper
triangular elements with diagonal elements all equal to one (again, with
respect to $\mathcal{B}).$We then have the Iwasawa decomposition%
\begin{equation}
G_{\mathbb{R}}^{\lambda}=\hat{K}\hat{A}\hat{U}\tag{1.5}%
\end{equation}
(with uniqueness of expression)( see [LG], Lemma 16.14).

Now $\mathfrak{\hat{h}}_{\mathbb{R}}$ is the Lie algebra of $\hat{A}$ and
$\nu$ defines a quasi-character%
\[
\nu:\hat{A}\rightarrow\mathbb{C}^{\times},
\]%
\[
a\mapsto a^{\nu},\text{ }a\in\hat{A}.
\]
Given $g\in\hat{G}_{\mathbb{R}}^{\lambda},$ $g$ has a decomposition%
\[
g=k_{g}a_{g}u_{g},
\]
with respect to (1.5). We then set%
\[
\Phi_{\nu}(g)=a_{g}^{\nu}.
\]

\section{Extensions of the Convergence Theorem (Preliminaries).}

For a field $k$ with algebraic closure $\bar{k},$ we let $\hat{B}_{\bar{k}%
}\subseteq G_{\bar{k}}^{\lambda}$ be the upper triangular subgroup (with
respect to the coherently ordered basis $\mathcal{B}$ ), and $\hat{B}_{k}%
=\hat{B}_{\bar{k}}\cap G_{k}^{\lambda}.$ We let $\hat{P}_{k}\supseteq\hat
{B}_{k}$ be a proper, parabolic subgroup of $\hat{G}_{k}$ $(=\hat{G}%
_{k}^{\lambda};$ we drop the superscript $``\lambda$\textquotedblright\ when
there is no ambiguity about which $\lambda$ we mean).

We consider various subgroups of $\hat{P}_{k}.$ We first let $\alpha
_{1},....,\alpha_{l+1}\in(\mathfrak{\hat{h}}^{e})^{\ast},$ the complex dual of
$\mathfrak{\hat{h}}^{e},$ be the simple roots:%
\[
\alpha_{i}(h_{j})=\tilde{A}_{ij},\text{ }i,j=1,....,l+1,
\]
where%
\[
\tilde{A}=(\tilde{A}_{ij})_{i,j=1,....,l+1}.
\]
We let $\Xi=\{s_{i}\}_{i=1,....,l+1}$ denote the corresponding, simple root
reflections (so $s_{i}$ is the root reflection corresponding to $\alpha_{i}).$
Then the $s_{i}$ generate the (affine) Weyl group $\hat{W}$ of $\mathfrak{\hat
{g}}$ (with respect to $\mathfrak{\hat{h}})$ and for $\theta\subseteq\Xi,$ we
let $W_{\theta}\subseteq\hat{W}$ be the subgroup generated by the elements of
$\theta.$ Then every subgroup $\hat{P}_{k}\supseteq\hat{B}_{k}$ is a group of
the form%
\[
\hat{P}_{k}=\hat{P}_{\theta,k}=\hat{B}_{k}W_{\theta}\hat{B}_{k},
\]
and every proper, parabolic subgroup of $\hat{G}_{k}$ is a conjugate of a
$\hat{P}_{\theta,k}$ for some $\theta\varsubsetneq\Xi$ (in fact, we take this
as the definition of \textquotedblleft proper parabolic\textquotedblright).

We let%
\[
\hat{\Delta}\subseteq(\mathfrak{\hat{h}}^{e})^{\ast}%
\]
be the affine roots of $\mathfrak{\hat{h}},$ and we let $\hat{\Delta}%
_{+}\subseteq\hat{\Delta}$ be the positive roots determined by the choice of
simple roots $\alpha_{1},....,a_{l+1}.$ When convenient, we identify $\Xi$
with the set of simple roots. For $\theta\subseteq\Xi$ (considered then as the
set of simple roots), we let $[\theta]\subseteq\hat{\Delta}$ denote the set of
all roots in $\hat{\Delta}$ which are linear combinations of the elements of
$\theta.$ We let $\hat{H}_{k}\subseteq\hat{B}_{k}$ be the diagonal subgroup
(with respect to the coherently ordered basis $\mathcal{B}),$ and for
$\theta\subseteq\Xi,$ we let%
\[
H_{\theta,k}=\{h\in\hat{H}_{k}|h^{\alpha_{i}}=1,\text{ }\alpha_{i}\in\theta\}.
\]
For an algebraically closed field $\bar{k},$ we let $L_{\theta,\bar{k}%
}\subseteq\hat{G}_{\bar{k}}$ be the subgroup generated by elements
$\{\chi_{\alpha}(u)\}_{\alpha\in\lbrack\theta],u\in\bar{k}}.$ For an arbitrary
field $k$ with algebraic closure $\bar{k},$we set%
\[
L_{\theta,k}=L_{\theta,\bar{k}}\cap\hat{G}_{k}.
\]

One lets $\hat{U}_{\theta,k}\subseteq\hat{P}_{\theta,k}$ be the pro-unipotent
radical; then%
\[
\hat{P}_{\theta,k}=M_{\theta,k}\hat{U}_{\theta,k},
\]
with $\hat{U}_{\theta,k}$ normal, $M_{\theta,\bar{k}}=L_{\theta,\bar{k}%
}H_{\theta,\bar{k}},$ and $M_{\theta,k}=M_{\theta,\bar{k}}\cap\hat{G}_{k} $
(see [LG2], Theorem 6.1).

We now consider the case when $k=\mathbb{R}.$ For $k=\mathbb{R},$we set
$\hat{G}^{\lambda}$ $(=\hat{G})=\hat{G}_{\mathbb{R}}^{\lambda},$ and%
\[
\hat{H}=\hat{H}_{\mathbb{R}},
\]%
\[
H_{\theta}=H_{\theta,\mathbb{R}}\text{ ,}%
\]%
\[
\hat{P}_{\theta}=\hat{P}_{\theta,\mathbb{R}}\text{ ,}%
\]%
\[
L_{\theta}=L_{\theta,\mathbb{R}},\text{ }M_{\theta}=M_{\theta,\mathbb{R}},
\]
etc.. We let%
\[
Z\subseteq\hat{H}%
\]
be the subgroup of all elements whose diagonal elements are $\pm1.$ We let%
\[
\lambda:\hat{H}\rightarrow\mathbb{C}^{\times}%
\]
be a quasi-character such that $\lambda|_{Z}$ is identically equal to $1.$ We
may identify $\lambda$ with a real, linear function%
\[
\lambda:\mathfrak{\hat{h}}_{\mathbb{R}}\rightarrow\mathbb{C},
\]
where $\mathfrak{\hat{h}}_{\mathbb{R}}$ is the Lie algebra of $\hat{H}.$

We are assuming $\mathfrak{g}=\mathfrak{g}(A)$ is simple (for recall, we
assumed at the beginning, that $A$ is irreducible), and that we have ordered
the $h_{i}$ so that $h_{1},....,h_{l}$ span $\mathfrak{h},$ the Cartan
subalgebra of $\mathfrak{g}(A)\subseteq\mathfrak{g}(\hat{A}).$ We let
$\alpha_{0}$ be the corresponding highest root of $\mathfrak{g}(A)$
($\alpha_{0}\in\mathfrak{h}^{\ast},$ the complex dual of $\mathfrak{h})$ and
we let%
\[
c=h_{\alpha_{0}}+h_{l+1}\in\mathfrak{\hat{h}},
\]
($h_{\alpha_{0}}$ denoting the coroot corresponding to $\alpha_{0}).$ Then $c$
spans the center of $\mathfrak{\hat{g}}.$ We have the extended Cartan%
\[
\mathfrak{\hat{h}}^{e}=\mathfrak{\hat{h}}\oplus\mathbb{C}D
\]%
\[
=\mathfrak{h}\oplus\mathbb{C}c\oplus\mathbb{C}D,
\]
and a corresponding decomposition of $(\mathfrak{\hat{h}}^{e})^{\ast}$, the
complex dual of $\mathfrak{\hat{h}}^{e},$%
\[
(\mathfrak{\hat{h}}^{e})^{\ast}=\mathfrak{h}^{\ast}\oplus\mathbb{C\lambda
}_{l+1}\oplus\mathbb{C}\iota,
\]
where, e.g.,%
\[
\lambda_{l+1}(c)=1,\text{ }\iota(D)=1.
\]
Note that $\iota$ is the generating, imaginary root and $\lambda_{l+1}$ is the
$l+1^{st}$ fundamental weight defined by%
\[
\lambda_{l+1}(h_{i})=\left\{
\begin{array}
[c]{c}%
0,\text{ }i\neq l+1\\
1,\text{ }i=l+1.
\end{array}
\right.
\]

Now with these conventions and notations, assume%
\[
\theta=\theta_{0}=\{\alpha_{1},....,\alpha_{l}\};
\]
then (over $\mathbb{R}$)%
\[
\mathfrak{h}_{\theta}=\mathbb{R}c\text{ (}\mathfrak{h}_{\theta}=\text{ Lie
algebra of }H_{\theta}),
\]%
\[
L_{\theta}=G,
\]
a real, connected Lie group with Lie algebra $\mathfrak{g}_{\mathbb{R}%
}=\mathbb{R}\otimes_{\mathbb{Z}}\mathfrak{g}_{\mathbb{Z}}.$

\section{Extensions of the Convergence Theorem (Continued).}

We return to $\nu$ and to $\Phi_{\nu},$ as considered in (1.3) and (1.4) - see
the exact definition of $\Phi_{\nu}$ at the end of \S 1. We now further assume
that $\nu$ is $\mathbb{R}$ -valued; i.e., that%
\begin{equation}
\nu:\mathfrak{\hat{h}}_{\mathbb{R}}\rightarrow\mathbb{R}\tag{3.1}%
\end{equation}
(so now $\nu(h_{i})<-2,$ $i=1,....,l+1).$ We let $\hat{P}=\hat{P}_{\theta
}\supseteq\hat{B}$ be a proper, parabolic subgroup, and we consider the sum
(1.4):%
\begin{equation}
\sum_{\gamma\in\hat{\Gamma}/\hat{\Gamma}\cap\hat{B}}\Phi_{\nu}(g\exp
(-rD)\gamma)\tag{3.2}%
\end{equation}%
\[
=\sum_{\gamma\in\hat{\Gamma}/\hat{\Gamma}\cap\hat{P}}\text{ }\sum_{\beta
\in\hat{\Gamma}\cap\hat{P}/\hat{\Gamma}\cap\hat{B}}\Phi_{\nu}(g\exp
(-rD)\gamma\beta).
\]

Thanks to our present assumption (3.1), the series on either side of (3.2) are
in fact series of positive terms, and since the left side is convergent
(Theorem 1.1), so is the right side, and in particular, the series%
\begin{equation}
\sum_{\beta\in\hat{\Gamma}\cap\hat{P}/\hat{\Gamma}\cap\hat{B}}\Phi_{\nu}%
(g\exp(-rD)\gamma\beta)\tag{3.3}%
\end{equation}
($\gamma\in\hat{\Gamma}/\hat{\Gamma}\cap\hat{P}$ now fixed) is convergent (and
of course, is absolutely convergent, since it is a sum of positive terms).

However, the series (3.3) is in fact a convergent Eisenstein series for the
(finite-dimensional) reductive group%
\[
M_{\theta}%
\]
(where recall $\hat{P}=\hat{P}_{\theta}$). More precisely, let%
\[
\pi:\hat{P}_{\theta}\rightarrow M_{\theta}%
\]
be the projection. Let%
\[
K_{\theta}=\hat{K}\cap\hat{P}_{\theta}=\hat{K}\cap M_{\theta},
\]
where $\hat{K}$ is as in \S 1; then of course%
\[
\pi(\hat{K}\cap\hat{P}_{\theta})=K_{\theta}.
\]

Consider the elements $g,$ $\gamma$ appearing in (3.3). We have%
\[
g\exp(-rD)\gamma=k_{g\gamma}m_{g\gamma}\exp(-rD)u_{g\gamma},\text{ }%
k_{g\gamma}\in\hat{K},\text{ }m_{g\gamma}\in M_{\theta},\text{ }u_{g\gamma}%
\in\hat{U}_{\theta};
\]
then $(\beta$ as in (3.3))%
\[
\Phi_{\nu}(g\exp(-rD)\gamma\beta)=\Phi_{\nu}(m_{g\gamma}\exp(-rD)\beta),
\]
and the sum (3.3) becomes%
\begin{equation}
\sum_{\beta\in\hat{\Gamma}\cap\hat{P}/\hat{\Gamma}\cap\hat{B}}\Phi_{\nu
}(m_{g\gamma}\exp(-rD)\beta).\tag{3.3$^\prime$}%
\end{equation}

Set%
\[
B_{\theta}=\pi(\hat{B})\subseteq M_{\theta},
\]%
\[
\Gamma_{\theta}=\pi(\hat{\Gamma}\cap\hat{P}_{\theta}).
\]
Then (3.3$^{\prime})$ equals%
\begin{equation}
\sum_{\beta\in\Gamma_{\theta}/\Gamma_{\theta}\cap B_{\theta}}\Phi_{\nu
}(m_{g\gamma}\exp(-rD)\beta),\tag{3.3$^\prime\prime$}%
\end{equation}
where $\Gamma_{\theta}$ is an arithmetic subgroup of $M_{\theta}.$ For $m\in
M_{\theta},$ we set%
\[
\Phi_{\nu}(m)=\Phi_{\nu}(m\exp(-rD)),
\]
and we let%
\[
\Gamma_{\theta}^{r}=_{df}\exp(-rD)\Gamma_{\theta}\exp(rD);
\]
Noting that $\exp(-rD)$ normalizes $B_{\theta},$ we have that the sum
(3.3$^{\prime\prime})$ equals%
\begin{equation}
\sum_{\beta\in\Gamma_{\theta}^{r}/\Gamma_{\theta}^{r}\cap B_{\theta}}\Phi
_{\nu}(m_{g\gamma}\beta),\tag{3.4}%
\end{equation}
which is an Eisenstein series for the pair $(M_{\theta},\Gamma_{\theta}^{r}).$

On the one hand, the absolute convergence of (3.4) follows from that of (3.2).
On the other hand, our assumption that $\nu$ in (3.1) satisfies Godement's
criterion $(\nu(h_{i})<-2,$ $i=1,....,l+1)$ in fact implies that (3.4) is
absolutely convergent, thanks to Godement's criterion for finite-dimensional groups.

It is useful to give an alternate description of the sum (3.4). Let%
\[
\mathfrak{h}(\theta)\subseteq\mathfrak{\hat{h}}_{\mathbb{R}},
\]
be the (real) linear span of the $h_{i},$ $(s_{i}\in\theta);$ then
$\mathfrak{h}(\theta)$ may be regarded as the Cartan subalgebra of
$\mathfrak{l}_{\theta},$ the Lie algebra of $L_{\theta}.$ We let%
\[
H(\theta)\subseteq L_{\theta}%
\]
denote the group generated by the elements%
\[
h_{\alpha_{i}}(s),\text{ }\alpha_{i}\in\theta,s\in\mathbb{R}^{\times},
\]
so that $\mathfrak{h}(\theta)$ is the Lie algebra of $H(\theta).$

We let%
\[
L_{\theta}^{\prime}=M_{\theta}/Z(M_{\theta}),\text{ }Z(M_{\theta})\text{
=center of }M_{\theta}.
\]
We then have that%
\[
\Phi_{\nu}|_{L_{\theta}}%
\]
is the lift of a function $\Phi_{\nu}^{\prime}$ on $L_{\theta}^{\prime}. $ If
we let%
\[
\tilde{\omega}:M_{\theta}\rightarrow L_{\theta}^{\prime}%
\]
denote the projection, if we let $H(\theta)^{\prime}=\tilde{\omega}%
(H(\theta))$ and $A(\theta)^{\prime}$ denote the identity component of
$H(\theta)^{\prime},$ $K_{\theta}^{\prime}=\tilde{\omega}(K_{\theta})$ and
$U_{\theta}^{\prime}=\tilde{\omega}(\hat{U}\cap M_{\theta}),$ then we have the
Iwasawa decomposition%
\[
L_{\theta}^{\prime}=K_{\theta}^{\prime}A(\theta)^{\prime}U_{\theta}^{\prime},
\]
and
\[
\Phi_{\nu}^{\prime}(k^{\prime}a^{\prime}u^{\prime})=(a^{\prime})^{\nu},\text{
}k^{\prime}\in K_{\theta}^{\prime},\text{ }a^{\prime}\in A(\theta)^{\prime
},\text{ }u^{\prime}\in U_{\theta}^{\prime},
\]
(where we may identify $A(\theta)^{\prime}$ with a subgroup of $L_{\theta},$
in order to define $(a^{\prime})^{\nu})$; then we have for $m\in L_{\theta},$%
\[
\sum_{\beta\in\Gamma_{\theta}^{r}/\Gamma_{\theta}^{r}\cap B_{\theta}}\Phi
_{\nu}(m\beta)
\]%
\begin{equation}
=\sum_{\beta\in(\Gamma_{\theta}^{r})^{\prime}/(\Gamma_{\theta}^{r})^{\prime
}\cap\tilde{\omega}(B_{\theta})}\Phi_{\nu}^{\prime}(\tilde{\omega}%
(m)\beta),\tag{3.5}%
\end{equation}
which is a convergent Eisenstein series on $L_{\theta}^{\prime}$ (with respect
to $(\Gamma_{\theta}^{r})^{\prime}=_{df}\tilde{\omega}(\Gamma_{\theta}^{r})$
and the Borel subgroups $\tilde{\omega}(B_{\theta})\subseteq L_{\theta
}^{\prime}$.) Let%
\[
E_{\theta}(m)=\tilde{E}_{\theta}(\tilde{\omega}(m))
\]
denote the convergent sum (3.5) (which as noted, is an Eisenstein series).

On the other hand, we consider%
\[
F_{\theta}(m)=_{df}\sum_{\beta\in\Gamma_{\theta}^{r}/\Gamma_{\theta}^{r}\cap
B_{\theta}}\Phi_{\nu}(m\beta);
\]
then%
\[
F_{\theta}E_{\theta}^{-1}(\cdot)
\]
is entirely determined by its restriction to $Z(M_{\theta}),$ and indeed%
\[
Z(M_{\theta})\subseteq\hat{H},
\]
and%
\[
F_{\theta}E_{\theta}^{-1}(z)=z^{\nu},\text{ }z\in Z(M_{\theta}).
\]
Some explanation is required here, since, strictly speaking, the
quasicharacter $\nu$ is only defined on the identity component $\hat
{A}\subseteq\hat{H},$ and hence only on the identity component $A(M_{\theta})$
of $Z(M_{\theta}).$ However, one has a homomorphism%
\[
\hat{H}\overset{\sigma}{\rightarrow}\hat{A}%
\]
given by absolute value (each $h\in\hat{H}$ is represented by a diagonal
matrix, and $\sigma(h)$ is just the corresponding matrix of absolute values,
and $\nu$ on $\hat{H}$ is just taken to be $\nu\circ\sigma.$

We set%
\[
\xi_{\nu}(\cdot)=F_{\theta}E_{\theta}^{-1}(\cdot);
\]
then of course,%
\[
F_{\theta}=E_{\theta}\xi_{\nu}%
\]
(where $\xi_{\nu}|_{L_{\theta}}\equiv1).$

But then, since (3.3) equals (3.4), we have%
\begin{equation}
\sum_{\beta\in\hat{\Gamma}\cap\hat{P}/\hat{\Gamma}\cap\hat{B}}\Phi_{\nu}%
(g\exp(-rD)\gamma\beta)=E_{\theta}(m_{g\gamma})\xi_{\nu}(m_{g\gamma
}),\tag{3.6}%
\end{equation}
and so (see (3.2))%
\begin{equation}
\sum_{\gamma\in\hat{\Gamma}/\hat{\Gamma}\cap\hat{B}}\Phi_{\nu}(g\exp
(-rD)\gamma)=\sum_{\gamma\in\hat{\Gamma}/\hat{\Gamma}\cap\hat{P}}E_{\theta
}(m_{g\gamma})\xi_{\nu}(m_{g\gamma}).\tag{3.7}%
\end{equation}

Now $E_{\theta}(\cdot)$ is by definition the lift of an Eisenstein series on
$L_{\theta}^{\prime},$ and hence is bounded below by some $\kappa>0.$ It
follows that%
\begin{equation}
\sum_{\gamma\in\hat{\Gamma}/\hat{\Gamma}\cap\hat{P}}\xi_{\nu}(m_{g\gamma
})<\infty\tag{3.8}%
\end{equation}
since the series on either side of (3.7) is (absolutely) convergent. (this
method of deriving convergence for general parabolics from convergence for
minimal ones, comes from [GMRV]).

It is instructive to analyze the element $m_{g\gamma}\in M_{\theta}.$ Recall
(before (3.3$^{\prime}))$ the equation%
\[
g\exp(-rD)\gamma=k_{g\gamma}m_{g\gamma}\exp(-rD)u_{g\gamma}.
\]
$M_{\theta}$ is then a direct product%
\[
M_{\theta}=\tilde{L}_{\theta}A(M_{\theta}),
\]
where recall that $A(M_{\theta})$ is the identity component of $Z(M_{\theta
}),$ and where $\tilde{L}_{\theta}\ $is a subgroup or $\hat{G}^{\lambda}$
containing $L_{\theta}$ with $\tilde{L}_{\theta}/L_{\theta}$ finite. Of course
$m\in M_{\theta}$ then has a corresponding expression $m=la,$ $l\in\tilde
{L}_{\theta},$ $a\in A(M_{\theta}),$ and in particular, this is the case for
$m_{g\gamma}$. Moreover%
\[
\xi_{\nu}(la)=a^{\nu},\text{ }l\in\tilde{L}_{\theta},a\in A(M_{\theta}).
\]

Given $g\in\hat{G}^{\lambda},$ and the decomposition%
\[
\hat{G}^{\lambda}=\hat{K}\hat{P}%
\]%
\[
=\hat{K}\tilde{L}_{\theta}A(M_{\theta})\hat{U}_{\theta},
\]
we have that $g\in\hat{G}^{\lambda}$ has a corresponding decomposition%
\[
g=k_{g}l_{g}a_{g}u_{g},
\]%
\[
k_{g}\in\hat{K},l_{g}\in\tilde{L}_{\theta},a_{g}\in A(M_{\theta}),u_{g}\in
\hat{U}_{\theta}.
\]
One has:

\begin{lemma}
$a_{g}$ is uniquely determined by $g.$
\end{lemma}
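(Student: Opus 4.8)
The plan is to deduce the uniqueness of $a_{g}$ from the uniqueness already built into the global Iwasawa decomposition (1.5), by refining the factor $l_{g}$ through the finite-dimensional Iwasawa decomposition of $\tilde{L}_{\theta}$. First I would invoke (1.5): every $g\in\hat{G}^{\lambda}$ has a \emph{unique} expression $g=kau$ with $k\in\hat{K}$, $a\in\hat{A}$, $u\in\hat{U}$; write $a_{\mathrm{Iw}}(g)\in\hat{A}$ for this uniquely determined torus part. The goal then becomes to show that $a_{g}$ is a function of $a_{\mathrm{Iw}}(g)$ alone, and in fact equals its $A(M_{\theta})$-component.

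Starting from any decomposition $g=k_{g}l_{g}a_{g}u_{g}$ as in the statement, I would apply the Iwasawa decomposition of the finite-dimensional reductive group $\tilde{L}_{\theta}$, chosen compatibly with the ambient data coming from $\mathcal{B}$, so that $l_{g}=\kappa_{l}a_{l}u_{l}$ with $\kappa_{l}\in\hat{K}\cap\tilde{L}_{\theta}$, $a_{l}\in\hat{A}\cap\tilde{L}_{\theta}$, and $u_{l}\in\hat{U}\cap\tilde{L}_{\theta}$. Since $a_{g}\in A(M_{\theta})$ lies in the center of $M_{\theta}$ and $u_{l}\in\tilde{L}_{\theta}\subseteq M_{\theta}$, the factor $a_{g}$ commutes with $u_{l}$, so that
\[
g=(k_{g}\kappa_{l})\,(a_{l}a_{g})\,(u_{l}u_{g}).
\]
Here $k_{g}\kappa_{l}\in\hat{K}$, while $a_{l}a_{g}\in\hat{A}$ (a product of positive-diagonal torus elements), and $u_{l}u_{g}\in\hat{U}$, since both $u_{l}$ and $u_{g}\in\hat{U}_{\theta}\subseteq\hat{U}$ are upper triangular unipotent with respect to $\mathcal{B}$. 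By the uniqueness asserted in (1.5), this forces $a_{l}a_{g}=a_{\mathrm{Iw}}(g)$.

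The final step is to separate $a_{g}$ from $a_{l}$. Because $M_{\theta}=\tilde{L}_{\theta}A(M_{\theta})$ is a direct product we have $\tilde{L}_{\theta}\cap A(M_{\theta})=\{1\}$, whence $(\hat{A}\cap\tilde{L}_{\theta})\cap A(M_{\theta})=\{1\}$ and the product $(\hat{A}\cap\tilde{L}_{\theta})\,A(M_{\theta})\subseteq\hat{A}$ is direct. Consequently the factorization $a_{\mathrm{Iw}}(g)=a_{l}\cdot a_{g}$, with $a_{l}\in\hat{A}\cap\tilde{L}_{\theta}$ and $a_{g}\in A(M_{\theta})$, is unique, and $a_{g}$ is precisely the $A(M_{\theta})$-component of $a_{\mathrm{Iw}}(g)$. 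Since $a_{\mathrm{Iw}}(g)$ depends only on $g$, so does $a_{g}$, which is the assertion. (As a byproduct, applying this to $g=\kappa\in K_{\theta}=\hat{K}\cap M_{\theta}$, where $a_{\mathrm{Iw}}(\kappa)=1$, yields $K_{\theta}\subseteq\tilde{L}_{\theta}$, which is the compatibility underlying the well-definedness of $\xi_{\nu}(m_{g\gamma})$.)

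The main obstacle I anticipate is the compatibility step: one must verify that the Iwasawa decomposition of the finite-dimensional Levi $\tilde{L}_{\theta}$ can be taken with its $K$-, $A$-, and $U$-factors equal to $\hat{K}\cap\tilde{L}_{\theta}$, $\hat{A}\cap\tilde{L}_{\theta}$, and $\hat{U}\cap\tilde{L}_{\theta}$, and in particular that the product $u_{l}u_{g}$ genuinely lands in $\hat{U}$. This rests on the coherent ordering of the basis $\mathcal{B}$, which simultaneously governs the triangular structure of $\hat{G}^{\lambda}$ and the compatible triangular structure of its Levi subgroups; once this is in place, everything else reduces to the uniqueness in (1.5) and the directness of $M_{\theta}=\tilde{L}_{\theta}A(M_{\theta})$.
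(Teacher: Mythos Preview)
Your argument is correct, but the paper takes a different and rather shorter route. Instead of reducing to the uniqueness in the global Iwasawa decomposition (1.5) and then splitting the torus factor via the direct product $M_{\theta}=\tilde{L}_{\theta}A(M_{\theta})$, the paper extracts $a_{g}$ directly by representation theory: letting $\mu$ be the sum of the fundamental weights $\lambda_{i}$ with $\alpha_{i}\notin\theta$, one has an action of $\hat{G}^{\lambda}$ on $V^{m\mu}$ for some positive integer $m$, and for a unit highest-weight vector $v_{m\mu}$ one computes $\|g\cdot v_{m\mu}\|=a_{g}^{m\mu}$, since $\hat{K}$ preserves the norm, $\tilde{L}_{\theta}$ and $\hat{U}_{\theta}$ fix the highest-weight line with the former acting through elements of norm one. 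This determines $a_{g}$ immediately.

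The trade-off is this: the paper's approach is a one-line computation once the auxiliary module $V^{m\mu}$ is in hand, and it yields an explicit formula for $a_{g}$ that is useful in its own right. Your approach avoids invoking any representation beyond $V^{\lambda}$ itself and stays entirely within group-theoretic decompositions, but it requires checking the compatibility of the Levi Iwasawa decomposition with the ambient one (the point you correctly isolate at the end), together with the direct-product splitting of $\hat{A}$ along $(\hat{A}\cap\tilde{L}_{\theta})\times A(M_{\theta})$. Both verifications are routine given the coherently ordered basis, so your proof goes through; it is simply more structural where the paper is more computational.
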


We briefly sketch the proof, which is a straightforward application of
representation theory. Let $\mu$ be the sum of those fundamental weights
$\lambda_{i}$ such that $\alpha_{i}$ is not in $\theta.$ Then the group
$\hat{G}^{\lambda}$ acts on $V^{m\mu}$ for some positive multiple $m\mu$ of
$\mu.$(see [LG], Prop. 20.2). We have a positive-definite, Hermitian inner
product $\{,\},$ on $V^{m\mu},$ as in \S 1, and we let $v_{m\mu}$ be a highest
weight vector of norm one, with respect to $||\cdot||,$ the norm corresponding
to $\{,\}.$ Then $||g\cdot v_{m\mu}||=a_{g}^{m\mu},$ and the lemma follows.

We now specialize to the case when%
\[
\theta=\theta_{i_{0}}=\Xi-\{\alpha_{i_{0}}\},
\]
for a single, simple root $\alpha_{i_{0}}.$ We have a relation
\[
\sum_{i=1}^{l}n_{i}\alpha_{i}^{\nu}+h_{l+1}=c,
\]
($c$ defined as in \S 2) where if $\alpha_{0}$ is the highest root of $g(A),
$then%
\[
\alpha_{0}^{\nu}=\sum_{i=1}^{l}n_{i}\alpha_{i}^{\nu},
\]
with $\alpha_{0}^{\nu},$ $\alpha_{i}^{\nu}$ denoting the coroots corresponding
to $\alpha_{0},$ $\alpha_{i},$ respectively.

Now if\qquad%
\begin{equation}
\nu(\alpha_{i}^{\nu})<-2,\text{ }i=1,....,l+1,\tag{3.9}%
\end{equation}
then of course%
\[
\nu(c)<-2(1+\sum_{i=1}^{l}n_{i});
\]
we set%
\[
g=1+\sum_{i=1}^{l}n_{i}%
\]
(which we will call the dual Coxeter number). Then%
\begin{equation}
\nu(c)<-2g.\tag{3.10}%
\end{equation}

On the other hand, if%
\[
\tilde{\nu}:\mathbb{R}c\rightarrow\mathbb{R}%
\]
satisfies (3.10), then $\tilde{\nu}$ is the restriction of some%
\[
\nu:\mathfrak{\hat{h}}_{\mathbb{R}}\rightarrow\mathbb{R}%
\]
satisfying (3.9).

If $g\in\hat{G}^{\lambda},$then%
\[
g\exp(-rD)\gamma=k_{g\gamma}l_{g\gamma}a_{g\gamma}\exp(-rD)u_{g\gamma},\text{
}k_{g\gamma}\in\hat{K},l_{g\gamma}\in\tilde{L}_{\theta},a_{g\gamma}\in
A(M_{\theta}),u_{g\gamma}\in\hat{U}_{\theta},
\]
and we set%
\[
\xi_{\nu}(g\gamma)=a_{g\gamma}^{\nu};
\]
then our above argument shows

\begin{theorem}
For $\theta=\theta_{i_{0}},$ as above and for%
\[
\nu:\mathfrak{\hat{h}}_{\mathbb{R}}\rightarrow\mathbb{R}%
\]
a real, linear function such that%
\[
\nu(c)<-2g,
\]
we have%
\[
\sum_{\gamma\in\hat{\Gamma}/\hat{\Gamma}\cap\hat{P}}a_{g\gamma}^{\nu}<\infty.
\]

\end{theorem}

\begin{corollary}
If $\tilde{\varphi}$ is a cusp form on $L_{\theta_{i_{0}}}^{\prime}$ which is
rapidly decreasing (e.g., an eigenfunction for the center of the universal
enveloping algebra of $L_{\theta_{i_{0}}}^{\prime}),$ if $\varphi
=\tilde{\varphi}\circ\tilde{\omega},$ and if
\[
\nu:\mathfrak{\hat{h}}_{\mathbb{R}}\rightarrow\mathbb{C}%
\]
is a real linear function, such that%
\begin{equation}
\operatorname{Re}(\nu)(c)<-2g,\tag{3.11}%
\end{equation}
then%
\begin{equation}
\sum_{\gamma\in\hat{\Gamma}/\hat{\Gamma}\cap\hat{P}}\varphi(m_{g\gamma}%
)\xi_{\nu}(a_{g\gamma}),\text{ }g\in\hat{G},\tag{3.12}%
\end{equation}
converges absolutely.
\end{corollary}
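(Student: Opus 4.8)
The plan is to reduce the absolute convergence of \eqref{(3.12)} directly to Theorem~3.2 by bounding the cusp-form factor by a constant and identifying the absolute value of the quasi-character factor $\xi_\nu$ with the value of a \emph{real} quasi-character to which Theorem~3.2 applies verbatim.

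First I would record that a rapidly decreasing cusp form is bounded. Since $\tilde\varphi$ is rapidly decreasing on the arithmetic quotient of the finite-dimensional reductive group $L_{\theta_{i_0}}^{\prime}$ (this is automatic when $\tilde\varphi$ is an eigenfunction of the center of the universal enveloping algebra, by Harish-Chandra's theory applied to $L_{\theta_{i_0}}^{\prime}$), there is a constant $C>0$ with $|\tilde\varphi|\le C$. Because $\varphi=\tilde\varphi\circ\tilde\omega$ factors through $\tilde\omega$ and descends to the quotient on which $\tilde\varphi$ lives, this gives the uniform bound $|\varphi(m_{g\gamma})|=|\tilde\varphi(\tilde\omega(m_{g\gamma}))|\le C$ for every $\gamma$. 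I emphasize that only boundedness, not the full strength of rapid decrease, is needed for \emph{this} statement; rapid decrease is the natural hypothesis because it is required later and because a bounded cusp form is in any case automatically rapidly decreasing.

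Next I would treat the factor $\xi_\nu(a_{g\gamma})=a_{g\gamma}^{\nu}$. Since $a_{g\gamma}$ lies in the identity component $A(M_\theta)$ and is therefore diagonal with positive entries (so that the map $\sigma$ of \S3 acts trivially on it), writing $\nu=\operatorname{Re}(\nu)+i\,\operatorname{Im}(\nu)$ and $a_{g\gamma}^{\nu}=e^{\nu(\log a_{g\gamma})}$ with $\log a_{g\gamma}$ real gives
\[
|a_{g\gamma}^{\nu}|=e^{(\operatorname{Re}\nu)(\log a_{g\gamma})}=a_{g\gamma}^{\operatorname{Re}(\nu)}.
\]
Combining the two estimates termwise over the same index set yields
\[
\sum_{\gamma\in\hat{\Gamma}/\hat{\Gamma}\cap\hat{P}}\bigl|\varphi(m_{g\gamma})\,\xi_\nu(a_{g\gamma})\bigr|\le C\sum_{\gamma\in\hat{\Gamma}/\hat{\Gamma}\cap\hat{P}}a_{g\gamma}^{\operatorname{Re}(\nu)}.
\]
Now $\operatorname{Re}(\nu)\colon\mathfrak{\hat h}_{\mathbb R}\to\mathbb R$ is a real linear function with $\operatorname{Re}(\nu)(c)<-2g$ by the hypothesis \eqref{(3.11)}, so Theorem~3.2 applies with $\nu$ replaced by $\operatorname{Re}(\nu)$ and shows the right-hand sum is finite. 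Absolute convergence of \eqref{(3.12)} follows.

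I do not expect a serious obstacle: the entire content is the reduction to Theorem~3.2, and the two numbered inputs (rapid decrease of cusp forms on $L_{\theta_{i_0}}^{\prime}$, and the convergence theorem 3.2) are already available. The only points requiring care are the passage $|a_{g\gamma}^{\nu}|=a_{g\gamma}^{\operatorname{Re}(\nu)}$, which is clean precisely because $A(M_\theta)$ consists of positive diagonal elements, and the fact that each summand $\varphi(m_{g\gamma})\,\xi_\nu(a_{g\gamma})$ is well-defined on the coset space $\hat{\Gamma}/\hat{\Gamma}\cap\hat{P}$; the latter is inherited from the well-definedness already implicit in Theorem~3.2 together with the $\Gamma_\theta$-invariance of $\varphi$. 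If anything is delicate, it is confirming that the bound on $\tilde\varphi$ on the \emph{quotient} transfers to a uniform bound on $\varphi(m_{g\gamma})$ as $\gamma$ varies, and this is immediate once one observes that $\varphi$ factors through $\tilde\omega$.
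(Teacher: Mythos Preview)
Your proof is correct and follows precisely the paper's own argument: bound $\varphi$ by a constant using rapid decrease, observe that $|\xi_\nu(a_{g\gamma})|=\xi_{\operatorname{Re}\nu}(a_{g\gamma})$, and dominate the series by the one in Theorem~3.2 applied to $\operatorname{Re}(\nu)$. The paper's version is simply a two-sentence compression of exactly what you wrote.
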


\begin{proof}
$\varphi$ is of course bounded. If $\nu$ satisfies (3.11), we can dominate the
series (3.12) by%
\[
\sum_{\gamma\in\hat{\Gamma}/\hat{\Gamma}\cap\hat{P}}\xi_{\operatorname{Re}\nu
}(a_{g\gamma}),
\]
which converges by Theorem 3.2.
\end{proof}

\section{Shahidi's Argument}

Thanks to the Corollary to Theorem 3.2, we have Eisenstein series on loop
groups which are associated to certain cusp forms on finite-dimensional,
semi-simple groups. For example, consider the affine Dynkin diagram associated
with $E_{6},$ with the vertices numbered as in [Bourb] (assign the number 7 to
the vertex corresponding to the negative of the highest root). Consider (with
this numbering)%
\[
\theta_{4}=\{\alpha_{1},\alpha_{2},\alpha_{3},\alpha_{5},\alpha_{6},\alpha
_{7}\},
\]
and the subgroups%
\[
M_{\theta_{4}},\text{ }L_{\theta_{4}}%
\]
of $\hat{G}^{\lambda}.$ Now take%
\[
\lambda=\lambda_{4},
\]
the fundamental weight corresponding to node 4 (in the numbering of [Bourb]).
Then $L_{\theta_{4}}$ locally isomorphic to
\[
SL_{3}(\mathbb{R})\times SL_{3}(\mathbb{R})\times SL_{3}(\mathbb{R}).
\]
Hence, starting with a cusp form $\varphi$ on $L_{\theta_{4}}^{\prime},$ one
obtains an Eisenstein series on $\hat{G}^{\lambda_{4}}$ (denote this group by
$\hat{E}_{6}).$

But then, motivated by [L], one can ask to find the constant term for such an
Eisenstein series, and then hope to obtain (for suitable $\varphi)$ an
expression involving $L$-functions associated with $\varphi$, and certain
representations of the Langlands dual $\hat{E}_{6}^{L}.$

However, at first, this strategy seemed doomed to fail: the constant terms
with respect to parabolics $\hat{P}_{\theta}$, as above, do not yield
$L$-functions as in [L]. The problem is that $M_{\theta_{4}}$ is not
self-associate, this being an instance of Shahidi's lemma (see [S]):

\begin{lemma}
Let $\theta=\theta_{i},$ $i=1,....,l+1;$ then $M_{\theta}$ is not self-associate.
\end{lemma}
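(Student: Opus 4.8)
The plan is to understand what "self-associate" means in this affine setting and then show it fails for every maximal parabolic $\hat{P}_{\theta_i}$. Recall that in the finite-dimensional theory, a standard Levi $M_\theta$ is called self-associate if it is conjugate (by an element of the Weyl group normalizing the ambient torus) to the Levi $M_{\theta'}$ of the opposite parabolic; equivalently, there exists $w \in \hat W$ with $w(\theta) = \theta'$, where $\theta'$ indexes the set of simple roots of the associate parabolic. For the maximal case $\theta = \theta_i = \Xi \setminus \{\alpha_i\}$, self-associateness amounts to asking whether there is an element of $\hat W$ (or of the relevant normalizer) carrying the subsystem $[\theta_i]$ to the subsystem generated by the negatives, in a way compatible with the grading by $\alpha_i$. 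So first I would fix the precise definition of associate parabolics that is being used (presumably the one from Shahidi's work, via Weyl group elements $w$ with $w(\theta) \subseteq \Xi$), and restate the claim as: there is no $w \in \hat W$ with $w([\theta_i]) = [\theta_i]$ sending $\hat\Delta_+ \setminus [\theta_i]$ to $\hat\Delta_- \setminus [\theta_i]$, i.e. no $w$ realizing the "long element modulo $W_{\theta_i}$" that preserves $\theta_i$.

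Next I would exploit the key structural difference between the affine and finite cases, which is exactly the point the introduction flags: \emph{in the finite-dimensional case lower triangular parabolics are conjugate to upper triangular ones, but in the loop case they are not}. Concretely, in a finite root system the longest Weyl element $w_0$ sends $\Delta_+$ to $\Delta_-$, and this is what makes every Levi self-associate (or associate to a standard one). The affine Weyl group $\hat W$ has \emph{no} longest element — it is infinite — so the mechanism that produces self-associateness in the finite case is simply unavailable. The cleanest route is therefore to show that no $w \in \hat W$ can send $\hat\Delta_+ \setminus [\theta_i]$ into $\hat\Delta_-$: I would count, or rather observe the infinitude of, positive affine roots not lying in $[\theta_i]$. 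Since $\theta_i$ omits one node, $[\theta_i]$ is (up to the finite directions) a finite-type or lower-rank affine subsystem, and the complementary set of positive roots outside $[\theta_i]$ is infinite; any $w \in \hat W$ maps $\hat\Delta_+$ to a set differing from $\hat\Delta_+$ in only finitely many roots (the inversion set $\hat\Delta_+ \cap w^{-1}\hat\Delta_-$ is finite because $w$ has finite length). Hence $w$ cannot flip an infinite set of roots to the negative side, so the required associating element does not exist.

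The key steps in order are therefore: (i) pin down the definition of self-associate, reducing it to the existence of a Weyl (or extended-Weyl) element $w$ with the flipping property on roots outside $[\theta_i]$; (ii) record the standard fact that each $w \in \hat W$ has finite length, so its inversion set is finite; (iii) show that for $\theta = \theta_i$ the set of positive roots outside $[\theta_i]$ is infinite — this uses that $\alpha_i$ appears with unbounded multiplicity across $\hat\Delta_+$, because the imaginary root $\iota$ (equivalently the null root) generates infinitely many real roots with growing $\alpha_i$-coefficient; (iv) conclude that no single $w$ can carry this infinite positive set to the negative side, so $M_{\theta_i}$ is not self-associate. I would treat the case $i = l+1$ and the cases $i \le l$ uniformly, since in each the omitted node forces $\iota$ (and hence infinitely many real roots) to lie outside $[\theta_i]$.

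The main obstacle, and the step deserving the most care, is (iii): one must verify that for \emph{every} choice of omitted node $\alpha_i$ the complementary positive-root set is genuinely infinite, rather than merely large. This is where the affine structure does the real work — I would argue that since removing any single node from the affine Dynkin diagram leaves a diagram of finite type (a classical fact about affine diagrams), the subsystem $[\theta_i]$ is finite, whereas $\hat\Delta_+$ itself is infinite, so the complement is infinite by cardinality alone. That finiteness-of-$[\theta_i]$ observation is the crux and makes the infinitude immediate; once it is in hand, the finite-length property of affine Weyl elements closes the argument. I would expect the only subtlety to be bookkeeping about whether "associate" is taken within $\hat W$ or within a normalizer that includes diagram automorphisms, but since diagram automorphisms are also length-finite on the root lattice in the relevant sense, the infinitude argument is robust to that choice.
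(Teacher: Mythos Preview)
Your proposal is correct and lands on the same contradiction as the paper, but by a slightly different route. The paper takes self-associate to mean: there exists $w_0 \in \hat W$ with $w_0(\theta)=\theta$ and $w_0(\alpha_i)<0$. It then composes $w_0$ with the longest element $w_0^\theta$ of the \emph{finite} group $W_\theta$, checks via a short coefficient computation that $w_0 w_0^\theta$ sends every simple root (hence all of $\hat\Delta_+$) to $\hat\Delta_-$, and observes that this is impossible because positive imaginary roots cannot be sent to negative roots by any element of $\hat W$. Your argument skips the composition with $w_0^\theta$: after your reduction step (i) you argue directly that such a $w_0$ would have to flip the infinite set $\hat\Delta_+\setminus[\theta_i]$ into $\hat\Delta_-$, contradicting the finiteness of inversion sets in $\hat W$. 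Both proofs bottom out in the same structural fact (no longest element in $\hat W$), and both implicitly use that $[\theta_i]$ is of finite type --- you to conclude $\hat\Delta_+\setminus[\theta_i]$ is infinite, the paper to have $w_0^\theta$ available. Be aware that your step (i) is not automatic from the paper's definition: to pass from ``$w_0(\theta)=\theta$, $w_0(\alpha_i)<0$'' to ``$w_0$ flips all of $\hat\Delta_+\setminus[\theta_i]$'' you need exactly the coefficient argument the paper carries out (namely that $w_0(\alpha_i)$ has strictly negative $\alpha_i$-coefficient, since $w_0$ bijects $[\theta_i]$ onto itself). Once that is written down, your version is arguably more direct; the paper's version has the virtue of pinpointing the imaginary roots as the explicit obstruction.
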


As the proof in [S] is not terribly long, we include it here for the
convenience of the reader: We have set $\mathfrak{h}(\theta)$ equal to the
real linear span of the $h_{j},$ $j\neq i$ $(\theta=\theta_{i}),$ and so
$\mathfrak{h}(\theta)$ is the Lie algebra of $H(\theta)\subseteq L_{\theta},$
the subgroup generated by the $h_{\alpha_{j}}(s),$ $j\neq i,$ $s\in
\mathbb{R}^{\times}.$ We let $\hat{W}_{\theta}$ denote the subgroup of the
Weyl group generated by the $s_{j},$ $j\neq i,$ and we set%
\[
w_{0}^{\theta}=\text{ longest element in }\hat{W}_{\theta}.
\]
Assume then, that there is an element $w_{0}\in\hat{W}$ such that%
\[
w_{0}(\theta)=\theta,\text{ }w_{0}(\alpha_{i})<0
\]
(this being the definition of $M_{\theta}$ being self-associate). Then%
\[
w_{0}w_{0}^{\theta}(\theta)=-\theta,
\]
while%
\[
w_{0}w_{0}^{\theta}(\alpha_{i})<0.
\]

To see this last assertion, we note that $w_{0}^{\theta}(\alpha_{i})$ has an
expression%
\[
w_{0}^{\theta}(\alpha_{i})=\alpha_{i}+\sum_{j\neq i}k_{j}\alpha_{j},
\]
and then%
\begin{equation}
w_{0}w_{0}^{\theta}(\alpha_{i})=w_{0}(\alpha_{i})+\sum_{j\neq i}k_{j}^{\prime
}\alpha_{j},\tag{4.1}%
\end{equation}
(since $w_{0}(\theta)=\theta).$ But, by assumption, $w_{0}(\alpha_{i})$ is
negative, and%
\[
w_{0}(\alpha_{i})=\sum_{j=1}^{l+1}b_{j}\alpha_{j},\text{ with }b_{i}\neq0
\]
(otherwise $w_{0}(\alpha_{j})\in\lbrack\theta],$ the roots which are linear
combinations of the elements of $\theta,$ for all $j,$ and this is not
possible). Hence%
\[
w_{0}w_{0}^{\theta}(\alpha_{i})<0,\text{ by (4.1).}%
\]
Hence $w_{0}w_{0}^{\theta}(\Delta_{+})=\Delta_{-},$ and in particular,
$w_{0}w_{0}^{\theta}$ maps positive imaginary roots to negative roots. This is
not possible, and so we obtain Lemma 4.1.

Now Lemma 4.1 seems to have an unfortunate consequence: At least for certain
maximal parabolic subgroups, one can not obtain non-trivial constant terms
from Eisenstein series associated with cusp forms of the reductive part.

But as Shahidi noted, there is also good news here: the theory of Eisenstein
series associated to cusp forms for the reductive part of a maximal parabolic
subgroup of a loop group does not depend on the knowledge of any new
$L$-functions, and so might be more accesible than otherwise. In fact, the
constant term of such Eisenstein series can be extremely simple. In the
notation of the Corollary to Theorem 3.2 , if $E_{\varphi}(\nu)$ denotes the
convergent sum (3.12), and if we consider the case of $\hat{E}_{6}$ and
$\theta_{4}$ (as described above) then $\hat{P}_{\theta_{4}}$ is not associate
to \textit{any} other parabolic and is not self associate by Lemma 4.1. We
then have as a consequence of Lemma 4.1:

\begin{lemma}
The constant term%
\[
E_{\varphi}(\nu)_{\hat{U}_{\theta_{i}}}(g)=_{df}\int_{\hat{U}_{\theta_{i}%
}/\hat{U}_{\theta_{i}}\cap\hat{\Gamma}}E_{\varphi}(\nu)(g\exp(-rD)u)du
\]
is equal to $\varphi(m_{g})\xi_{\nu}(a_{g}).$
\end{lemma}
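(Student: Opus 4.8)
The plan is to compute the constant term integral directly by substituting the explicit Eisenstein series (3.12) and then using the self-associateness failure from Lemma 4.1 to collapse the resulting sum over double cosets to a single term. Let me sketch how I would prove that the constant term equals $\varphi(m_g)\xi_\nu(a_g)$.

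The plan is to compute the integral by substituting the defining series (3.12) for $E_\varphi(\nu)$, interchanging the summation and the integration, and then reorganizing the sum over $\hat{\Gamma}/\hat{\Gamma}\cap\hat{P}$ (with $\hat{P}=\hat{P}_{\theta_i}$) according to the Bruhat decomposition of $\hat{G}$ relative to $\hat{P}$. The interchange is legitimate because the Corollary to Theorem 3.2 guarantees that (3.12) converges absolutely. Using
\[
\hat{\Gamma}/\hat{\Gamma}\cap\hat{P}=\bigsqcup_{w\in W_\theta\backslash\hat{W}/W_\theta}\bigl(\hat{\Gamma}\cap\hat{P}w\hat{P}\bigr)/\hat{\Gamma}\cap\hat{P},
\]
the constant term becomes a sum of cell contributions, each an integral over $\hat{U}_{\theta_i}/\hat{U}_{\theta_i}\cap\hat{\Gamma}$ of the corresponding part of the series.

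For each cell I would carry out the standard Langlands unfolding, as in [L]: combining the integration variable $u$ with the Bruhat representative $w$ by a change of variables, the cell-$w$ contribution factors into a finite intertwining-type integral times an integral of $\varphi$ over a unipotent subgroup of the Levi $M_\theta$ determined by $w$. The decisive reduction is cuspidality: since $\tilde{\varphi}$ is a cusp form on $L_{\theta_i}^{\prime}$, the integral of $\varphi=\tilde{\varphi}\circ\tilde{\omega}$ over any nontrivial unipotent subgroup of $M_\theta$ vanishes, being a constant term of $\tilde{\varphi}$ along a proper parabolic of $L_{\theta_i}^{\prime}$. Hence every cell for which $w$ fails to stabilize $\theta$ (that is, $w\theta\neq\theta$) contributes zero, and only the $w$ with $w\theta=\theta$ survive.

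At this point Lemma 4.1 provides the crucial simplification. Self-association of $M_{\theta_i}$ would require a nontrivial $w\in\hat{W}$ with $w\theta=\theta$ and $w(\alpha_i)<0$, and Lemma 4.1 rules this out; thus among the elements stabilizing $\theta$ only the identity $w=1$ remains. For the identity cell the unique contributing coset is $\gamma=1$, and I observe that right-translating $g\exp(-rD)$ by $u\in\hat{U}_{\theta_i}$ alters only the unipotent factor of the decomposition $g\exp(-rD)u=k\,m\exp(-rD)u^{\prime}$, leaving the $M_\theta$-component $m_g$ and the $A(M_\theta)$-component $a_g$ unchanged. The summand $\varphi(m_g)\xi_\nu(a_g)$ is therefore constant on the fibre, and integrating over the (mass-one) quotient $\hat{U}_{\theta_i}/\hat{U}_{\theta_i}\cap\hat{\Gamma}$ reproduces exactly $\varphi(m_g)\xi_\nu(a_g)$, as claimed.

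The main obstacle is justifying the term-by-term unfolding in the loop setting, where $\hat{W}$ is the infinite affine Weyl group and $\hat{U}_{\theta_i}$ is pro-unipotent. One must verify that the interchange of the infinite Bruhat sum with the integration over the pro-group $\hat{U}_{\theta_i}/\hat{U}_{\theta_i}\cap\hat{\Gamma}$ is valid, and that the cuspidal vanishing applies uniformly across the infinitely many cells. I expect the imaginary-root mechanism exploited in the proof of Lemma 4.1 to be precisely what forces the stabilizer of $\theta$ in $\hat{W}$ to collapse to the identity; the delicate point is to confirm that this same imaginary-root structure does not introduce convergence subtleties in the unfolding that are absent from the finite-dimensional situation of [L].
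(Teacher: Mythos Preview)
Your proposal is correct and is precisely the standard Langlands unfolding argument that the paper has in mind; the paper itself does not write out a proof of Lemma~4.2 but simply records it ``as a consequence of Lemma~4.1,'' and your sketch is the natural elaboration of that remark. Your identification of the key steps---cuspidal vanishing reducing the Bruhat sum to those $w$ with $w\theta=\theta$, and Lemma~4.1 then forcing $w=1$---matches the paper's intent exactly, and your caveat about justifying the interchange of the infinite affine-Weyl-group sum with the pro-unipotent integration is a legitimate technical point that the paper leaves implicit.
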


In our example of $(\hat{E}_{6},\theta_{4})$ then, this is the only non-zero
contribution to the constant term. In [MS2], we extended Arthur's definition
of truncation to loop groups ([MS2], Definition 3.2). In the notation of that
paper,  we have as a consequence of the simplicity of the constant terms we
have just discussed, that for $(\hat{E}_{6},\theta_{4}):$%
\[
\wedge^{H_{0}}E_{\varphi}(\nu)(g\eta(s))\text{ (}s=\exp(-r),\text{ }%
\eta(s)=\exp(-rD),\text{ as in [MS 2])}%
\]%
\[
=\sum_{\gamma\in\hat{\Gamma}/\hat{\Gamma}\cap\hat{P}_{\theta_{4}}}(1-\hat
{T}_{\theta_{4},H_{0}}(g\eta(s)\gamma))\varphi(m_{g\gamma})\xi_{\nu
}(a_{g\gamma}),
\]
where $m_{g\gamma},$ $a_{g\gamma}$ are recall, defined by%
\[
g\eta(s)\gamma=k_{g\gamma}l_{g\gamma}a_{g\gamma}\eta(s)u_{g\gamma},\text{
}m_{g\gamma}=l_{g\gamma}a_{g\gamma},
\]
as in \S 3, just before Theorem 3.2.

Now let $\nu^{\prime}:\mathbb{R}c\rightarrow\mathbb{C}$ be a second, real
linear map satisfying (3.11) and let $\tilde{\psi}$ be a second cusp form on
$L_{\theta}^{\prime}$ $($and set $\psi=\tilde{\psi}\circ\tilde{\omega});$
then
\[
\{\wedge^{H_{0}}E_{\varphi}(\nu),\wedge^{H_{0}}E_{\psi}(\nu^{\prime})\}
\]%
\[
=_{df}\int_{\hat{K}\backslash\hat{G}^{\lambda}/\hat{\Gamma}}\wedge^{H_{0}%
}E_{\varphi}(\nu)(g\eta(s))\overline{\wedge^{H_{0}}E_{\psi}(\nu^{\prime}%
)}(g\eta(s))dg,
\]
and one obtains that this last expression equals%
\[
=-\{\varphi,\psi\}_{L_{\theta_{4}}^{\prime}}\frac{\exp((\sigma+\bar{\sigma
}^{\prime})(H_{0}))}{(\sigma+\bar{\sigma}^{\prime})(c)}.
\]
The notation here is as follows:%
\[
\sigma=\nu+\rho,\text{ }\sigma^{\prime}=\nu^{\prime}+\rho,
\]
$\{,\}_{L_{\theta_{4}}^{\prime}}$ denotes the inner product induced from a
suitable Haar measure on $L_{\theta_{4}}^{\prime}/(\Gamma_{\theta_{4}}%
^{r})^{\prime},$ and $da$ is a suitable Haar measure on $A(M_{\theta_{4}}).$
To obtain this result, one uses the methods of [MS3], [MS4]. As in [MS3], one
first replaces the Eisenstein series $E_{\varphi}(\nu)$ (and similarly,
$E_{\psi}(\nu^{\prime}))$ by a pseudo-Eisenstein series: let $\Phi=\Phi(a)$ on
$A(M_{\theta_{4}})$ be a $C^{\infty}$ function with compact support, and let%
\[
E_{\varphi}(\Phi)(g\eta(s))=\sum_{\gamma\in\hat{\Gamma}/\hat{\Gamma}\cap
\hat{P}_{\theta_{4}}}\varphi(m_{g\gamma})\Phi(a_{g\gamma});
\]
then $E_{\varphi}(\Phi)$ is called a pseudo-Eisenstein series. One lets%
\[
\hat{\Phi}(\mu)=\int_{A(M_{\theta_{4}})}\Phi(a)\exp(-(\mu-\rho)(\log
a))d\mu_{I},
\]
where the notation is as follows: $\mu:\mathbb{R}c=\mathfrak{h}_{\theta_{4}%
}\rightarrow\mathbb{C}$ is real linear, and $\mu_{I}$ denotes the imaginary
part of $\mu.$ One can then define the truncation $\wedge^{H_{0}}E_{\varphi
}(\Phi),$ just as we defined $\wedge^{H_{0}}E_{\varphi}(\nu),$ and then for
$\Psi$ a second $C^{\infty}$ function with compact support on $A(M_{\theta
_{4}}),$we have for $\mu_{0},$ $\mu_{0}^{\prime}:\mathbb{R}c\rightarrow
\mathbb{R}$ with%
\[
\mu_{0}(c)<-g,\text{ }\mu_{0}^{\prime}(c)<-g,
\]
that, similarly to [MS3],%
\[
\{\wedge^{H_{0}}E_{\varphi}(\Phi),\text{ }\wedge^{H_{0}}E_{\psi}(\Psi)\}
\]%
\[
=-\int_{\operatorname{Re}\mu=\mu_{0}}\int_{\operatorname{Re}\mu^{\prime}%
=\mu_{0}^{\prime}}\hat{\Phi}(\mu)\overline{\hat{\Psi}(\mu^{\prime})}\Xi
(\mu,\bar{\mu}^{\prime})d\mu_{I}d\mu_{I}^{\prime},
\]
where%
\[
\Xi(\mu,\bar{\mu}^{\prime})=\{\varphi,\psi\}_{L_{\theta_{4}}^{^{\prime}}}%
\frac{\exp(\mu+\bar{\mu}^{\prime})(H_{0})}{(\mu+\bar{\mu}^{\prime})(H_{0})}.
\]
The argument in the present setting is in fact simpler than that in [MS3]: one
does not have to contend with the infinite sums over the affine Weyl group
that appear in [MS3], and one does not need to use the functional equation for
$c$-functions, in order to show that certain poles cancel, and so, as a
result, that one can move the contours of certain integrals past these
(non-existent) poles. In the present setting, there are no poles from the
$c$-functions, since non-trivial $c$-functions don't even occur in the formula
for $\Xi(\mu,\bar{\mu}^{\prime})!$

Finally, we can pass from the inner product for truncated pseudo-Eisenstein
series to that for truncated Eisenstein series, as in [MS4]. In particular, we
obtain that the truncated Eisenstein series $\wedge^{H_{0}}E_{\varphi}(\nu)$
is square summable.

Now the above computation is valid for%
\[
\operatorname{Re}\sigma(c)<-g,
\]%
\[
\operatorname{Re}\sigma^{\prime}(c)<-g,
\]
or equivalently%
\begin{equation}
\operatorname{Re}\nu(c)<-2g,\tag{4.2}%
\end{equation}%
\[
\operatorname{Re}\nu^{\prime}(c)<-2g.
\]
But clearly (in $\nu,$ $\nu^{\prime})$ the right side of the equality
(Maass-Selberg relation)%
\begin{equation}
\{\wedge^{H_{0}}E_{\varphi}(\nu),\wedge^{H_{0}}E_{\psi}(\nu^{\prime
})\}\tag{4.3}%
\end{equation}%
\[
=\{\varphi,\psi\}_{L_{\theta}^{\prime}}\frac{\exp((\sigma+\bar{\sigma}%
^{\prime})(H_{0}))}{(\sigma+\bar{\sigma}^{\prime})(c)}%
\]
is holomorphic in the region (4.2), and in fact, has a holomorphic extension
to the region%
\begin{equation}
\operatorname{Re}\nu(c)<-g,\tag{4.4}%
\end{equation}%
\[
\operatorname{Re}\nu^{\prime}(c)<-g;
\]
i.e.,%
\begin{equation}
\operatorname{Re}\sigma(c)<0,\text{ }\operatorname{Re}\sigma^{\prime
}(c)<0.\tag{4.4$^\prime$}%
\end{equation}
From this one can deduce that the Eisenstein series has a \textit{holomorphic}
continuation (in $\nu),$ as a locally integrable function, to the region%
\[
\operatorname{Re}\nu(c)<-\rho(c)=-g.
\]
We emphasize again: this is a \textit{holomorphic} continuation! -
\textit{not} just a meromorphic one.

We note that the validity of (4.3) only depends on our assumption that
$\hat{P}_{\theta}$ is not associate to any $\hat{P}_{\theta^{\prime}},$
$\theta^{\prime}\neq\theta,$ by virtue of $\mathfrak{l}_{\theta},$
$\mathfrak{l}_{\theta^{\prime}}$ not being isomorphic to one another
($\mathfrak{l}_{\theta},$ $\mathfrak{l}_{\theta^{\prime}}$ being the Lie
algebras of $L_{\theta},$ $L_{\theta^{\prime}},$ respectively). There are of
course many instances other than the case of $(\hat{E}_{6},\theta_{4})$
considered earlier, where this assumption holds; e.g.,%
\[
(\hat{E}_{7},\theta),\text{ }\theta=\Xi-\{\alpha_{4}\},
\]%
\[
\Xi=\text{ set of simple roots, }\alpha_{4}\text{ as in [Bourb].}%
\]

\section{Local Issues: A Summary of where Things Stand.}

The question remains: Are there applications of (4.3) and the holomorphic
continuation of loop Eisenstein series to the theory of $L$-functions, as in
the finite-dimensional case treated in [L]? The seeming paradox here is that
the argument for (4.3) (which is based on Shahidi's lemma 4.1) also seems to
preclude obtaining new results on $L$-functions: For $(\hat{E}_{6},\theta
_{4})$ for example, Lemma 4.2 implies that $L$-functions do not even occur in
the constant term.

It was Braverman and Kazhdan who pointed to a possible way out of this
dilemma: they argued that though the constant terms with respect to the "upper
triangular" $\hat{P}_{\theta}$ are trivial, one could consider the constant
terms with respect to "lower triangular" parabolics. By a "lower triangular"
parabolic one means a proper subgroup of $\hat{G}_{k}^{\lambda}$, $k$ a field,
containing the group of elements in $\hat{G}_{k}^{\lambda}$ which are lower
triangular with respect to the coherently ordered basis $\mathcal{B}$ (a
"lower triangular" Borel subgroup). An upper "upper triangular" parabolic is
simply a prabolic subgroup as defined in \S 2.

One expects that any computation of such constant terms would depend on local
computations, and in particular, would depend on suitable Gindikin-Karpelevich
formulae. These formulae would have to be established for the following three
cases: (i). $k=\mathbb{R}$ or $\mathbb{C},$ (ii) $k=F((t)),$ $F$ a finite
field, and (iii) $k=$ a finite algebraic extension $\mathcal{K}$ of a $p$-adic
completion of the rational numbers. We note that the results of \S \S 1-4,
above, can all be developed equally well for function fields over finite
fields.Concerning cases (ii) and (iii), a Gindikin-Karpelevich formula was
conjectured in [BFK]. This formula was derived by assuming that a certain
result in [BFG] for F((t)), F a field of characteristic 0, was also valid for
F a finite field. Recently, A. Braverman informed me that this was in fact the
case. The resulting formula for case (ii) then also suggested the formula for
case (iii). In [BGKP], we prove this conjecture (with a small modification)
for both cases (ii) and (iii). The proof in [BGKP] is based on a formula of A.
Braverman, D. Kazhdan, and M. Patnaik, for spherical functions on p-adic loop
groups and on loop groups over $F((t)),$ $F$ a finite field.

\section{ Bibliography}

[A] J. Arthur, A trace formula for reductive groups. II: applications of a
truncation operator, Compos. Math. \textbf{40}(1980), 87-121.

\bigskip

[Bourb] N. Bourbaki, Groupes et alg\`{e}bres de Lie, Chapitres 4, 5 et 6,
Hermann, Paris(1968).

\bigskip

[BFG] A. Braverman, M. Finkelberg and D. Gaitsgory, Uhlenbeck spaces via
affine Lie algebras, The unity of mathematics, 17-135, Progr. Math.,
\textbf{244}, Birkh\"{a}user, Boston MA, 2006.

\bigskip

[BFK] A. Braverman, M. Finkelberg, D. Kazhdan, "Affine Gindikin-Karpelevich
formula via Uhlenbeck spaces, arXiv: 0912.5132 v.2 [math.RT].

\bigskip

[BGKP] A. Braverman, H. Garland, D. Kazhdan and M. Patnaik, A Gindikin
Karpelevich formula for loop groups over non-archimedian, local fields, in preparation.

\bigskip

[GMRV] M. B. Green, S. D. Miller, J. G. Russo, P. Vanhove, Eisenstein series
for higher-rank groups and string theory amplitudes, arXiv: 1004.0163 v. 2 [hep-th].

\bigskip

[LA] H. Garland, The arithmetic theory of loop algebras, J. Algebra
\textbf{53}(1978), 480-551.

\bigskip

[LG] H. Garland, The arithmetic theory of loop groups, Inst. Hautes \'{E}tudes
Sci. Publ. Math. \textbf{52}(1980), 5-136.

\bigskip

[R] H. Garland, Certain Eisenstein series on loop groups: convergence and the
constant term, Proceedings of the International Conference on Algebraic Groups
and Arithmetic (in Honor of M. S. Raghunathan), December 2001 (S. G. Dani and
Gopal Prasad, eds.), Tata Institute of Fundamental Research, Mumbai, India,
2004, 275-319.

\bigskip

[AC] H. Garland, Absolute convergence of Eisenstein series on loop groups,
Duke Math. J. \textbf{135}(2006), 203-260.

\bigskip

[MS2] H. Garland, Eisenstein series on loop groups: Maass-Selberg relations 2,
Amer. J. Math. \textbf{129}(2007), 723-784.

\bigskip

[MS3] H. Garland, Eisenstein series on loop groups: Maass-Selberg relations 3,
Amer. J. Math. \textbf{129}(2007), 1277-1353.

\bigskip

[MS4] H. Garland, Eisenstein series on loop groups: Maass-Selberg relations 4,
Contemp. Math. \textbf{442}(2007), 115-158,  Proceedings of the conference
"Lie Algebras, Vertex Operator Algebras and their Applications" in honor of
James Lepowsky and Robert Wilson.

\bigskip

[L] R. P. Langlands, Euler products, Yale Mathematical Monographs \textbf{1},
Yale University Press, New Haven CT(1971).

\bigskip

[L2] R. P. Langlands, On the functional equations satisfied by Eisenstein
series, Lecture Notes in Mathematics \textbf{544}, Springer-Verlag, New York (1976)

\bigskip

[Lo] P. J. Lombardo, The constant terms of Eisenstein series on affine
Kac-Moody groups over function fields, Ph.D. thesis, University of
Connecticut, Storrs, CT (2010).

\bigskip

[S] F. Shahidi, Infinite dimensional groups and automorphic $L$ -functions,
Pure Appl. Math. Q. \textbf{1}(2005), 683-699.

\end{document}